\providecommand{\@LN}[2]{}
\newtheorem{thm}{Theorem}[section]
\newtheorem{lem}[thm]{Lemma}
\newtheorem{prop}[thm]{Proposition}
\theoremstyle{remark}
\newtheorem{rema}[thm]{Remark}
\newtheorem{defi}[thm]{Definition}
\title{Complete Lie algebroid actions and the integrability of Lie algebroids}
\author{Daniel \'Alvarez}\address{Departamento de Matem\'aticas, Facultad de Ciencias, Universidad Nacional Aut\'onoma de M\'exico, CP \texttt{04510}, M\'exico}
\email{verbum@ciencias.unam.mx}
\date{} % Delete this line to display the current date
\begin{document}

%\layout

\begin{abstract} We give a new proof of the equivalence between the existence of a complete action of a Lie algebroid on a surjective submersion and its integrability. The main tools in our approach are double Lie groupoids and multiplicative foliations, our proof relies on a simple characterization of those vacant double Lie groupoids which induce a Lie groupoid structure on their orbit spaces.
\end{abstract}\maketitle
\tableofcontents % the asterisk means that the contents itself isn't put into the ToC

\section{Introduction} Lie algebroids are basic objects in differential geometry, generalizing Lie algebras on one hand and tangent bundles on the other. Among many other geometric structures, Lie algebroids allow us to study Poisson manifolds, foliations and connections on principal bundles in a unified way \cite{macgen}. Generalizing the correspondence between Lie groups and Lie algebras, Lie algebroids can be seen as the infinitesimal objects corresponding to global structures which are known as Lie groupoids. Unlike finite-dimensional Lie algebras, not every Lie algebroid corresponds to a Lie groupoid. Those Lie algebroids which can be seen as the infinitesimal counterpart of a Lie groupoid are called integrable. It is of fundamental importance to recognize when a Lie algebroid is integrable \cite{crarui}, one of the main results in this respect is the equivalence between the existence of a complete action of a Lie algebroid on a surjective submersion and its integrability, see Theorem \ref{comsym} below which originally appeared formulated in terms of Poisson geometry \cite{craruipoi} since it settled a question posed in \cite{weican} about the existence of complete symplectic realizations. In this note we give a self-contained proof of this result and also we use a definition of completeness which seems to be weaker than the one that is used in \cite{craruipoi}. Our arguments do not depend on the construction of infinite-dimensional manifolds as in \cite{crarui,craruipoi} and are instead purely finite-dimensional, the key concept behind our proof is that of double Lie groupoid \cite{macdou}. Double Lie groupoids can be described as objects which encode global symmetries of Lie groupoids and so they are a useful tool for the study of quotients of Lie groupoids. In fact, the proof of Theorem \ref{comsym} can be roughly summarized as follows: a complete Lie algebroid action induces a multiplicative foliation on a submersion groupoid and this multiplicative foliation can be integrated to a double Lie groupoid. It turns out that the orbit space of this double Lie groupoid (with respect to one of its sides) is the desired integration of the Lie algebroid under consideration. The technical result that allows us to reach this conclusion describes the conditions that ensure the existence of a Lie groupoid structure on the orbit space of a vacant double Lie groupoid and it may be of independent interest, see Theorem \ref{vac3} below. 
\subsection*{Acknowledgments.} The author thanks CNPq for the financial support and H. Bursztyn for his continuous advice and support. The author also thanks the referee whose suggestions improved the readability of this work. 

\section{Preliminaries} \subsection{Lie groupoids and Lie algebroids} A {\em smooth groupoid} $G$ over a manifold $M$, denoted $G \rightrightarrows M$, is a groupoid object in the category of smooth manifolds (not necessarily Hausdorff) such that its source map is a submersion, see \cite{moeint,dufzun}. In this situation, $M$ is called the {\em base} of $G \rightrightarrows M$. The structure maps of a groupoid are its source, target, multiplication, unit map and inversion, denoted respectively $\mathtt{s},\mathtt{t},\mathtt{m},\mathtt{u} , \mathtt{i}$, we also denote $\mathtt{m}(a,b)$ by $ab$. A {\em Lie groupoid} is a smooth groupoid if its base and source-fibers are Hausdorff. When several groupoids are involved, we use a subindex $\mathtt{s}=\mathtt{s}_G$, $\mathtt{t}_G$, $\mathtt{m}_G$ to specify the groupoid under consideration. 

A Lie groupoid $G \rightrightarrows M$ is \emph{proper} if the map $(\mathtt{t},\mathtt{s}):G \rightarrow M \times M$ is proper. A Lie groupoid is \emph{source-simply-connected} if its source fibres are 1-connected. \emph{A left (right) Lie groupoid action} of a Lie groupoid $G \rightrightarrows M$ on a map $J:S \rightarrow M$ is a smooth map $a:G_\mathtt{s} \times_J S \rightarrow S$ (respectively, $a:S_J \times_\mathtt{t} G \rightarrow S$) which satisfies (1) $a(\mathtt{m} (g,h),x)=a(g,a(h,x))$ (respectively, $a(x,\mathtt{m} (g,h))=a(a(x,g),h)$) for all $g,h\in G$ and for all $x\in S$ for which $a$ and $\mathtt{m}$ are defined and (2) $a(\mathtt{u}(J(x)),x)= x$ for all $x\in M$. For the sake of brevity, we denote the fiber product $G_\mathtt{t} \times_J S $ by $G \times_M S$. In this situation, $G \times_M S$ becomes a Lie groupoid over $S$ with the projection being the source map, $a$ being the target map and the multiplication is defined by $(g,a(h,p))(h,p)=(gh,p)$. The Lie groupoid $G \times_M S\rightrightarrows S$ thus defined is called an {\em action groupoid}. Another construction which is relevant to us is the following: let $q:S \rightarrow M$ be a surjective submersion. Then the fiber product $S \times_M S$ is a Lie groupoid over $S$ with the source and target maps being the projections to $S$ and the multiplication given by $\mathtt{m}((x,y),(y,z))=(x,z)$. The Lie groupoid thus obtained is called the {\em submersion groupoid} associated to $q$.  

A {\em Lie algebroid} consists of the following data: (1) a vector bundle $A$ over a manifold $M$, (2) a bundle map $\mathtt{a}:A\rightarrow TM$ called the {\em anchor} and (3) a Lie algebra structure $[\,,\,]$ on $\Gamma(A)$ such that the Leibniz rule holds:
\[ [u,fv]=f[u,v]+\left(\mathcal{L}_{\mathtt{a}(u)}f\right)v, \] 
for all $u,v\in \Gamma(A)$ and $f\in C^\infty(M)$. The definition of Lie algebroid morphism is more involved, see \cite{higmacalg,vai}. The {\em tangent Lie algebroid} $\text{Lie} (G)$ of a Lie groupoid $G\rightrightarrows M$ is the vector bundle $\ker T\mathtt{s}|_{M}$ endowed with the restriction of $T\mathtt{t}$ to $A$ as the anchor and with the bracket determined by $[u,v]^r=[u^r,v^r]$ for all $u,v\in \Gamma (A)$, where $w^r\in \Gamma (G)$ is defined by $w^r(g)=T \mathtt{m}(w_{\mathtt{t}(g)},0_g)$ for all $w\in \Gamma (\ker T\mathtt{s}|_{M})$ and all $g\in G$. Also, we have that a Lie groupoid morphism induces a Lie algebroid morphism, thereby producing a functor from the category of Lie groupoids to the category of Lie algebroids which is called the {\em Lie functor} \cite{moeint}. 

A family of Lie algebroids which is relevant to us is given by the following construction. A Lie algebroid $A$ over $M$ acts on a map $J:S \rightarrow M$ if there is a Lie algebra morphism $\rho:\Gamma (A) \rightarrow \mathfrak{X}(S)$ such that $u_S:=\rho(u)$ is $J$-related to $\mathtt{a}(u)$ for all $u\in \Gamma (A)$. In this situation, the pullback vector bundle $J^*A$ inherits a Lie algebroid structure over $S$ called the {\em action Lie algebroid structure} \cite{macgen}. Since $\Gamma (J^*A)\cong C^\infty(S)\otimes_{C^\infty (M)}\Gamma (A) $, the Lie bracket on $\Gamma (J^*A)$ is determined by extending the bracket on $\Gamma (A) \hookrightarrow \Gamma (J^*A)$ using the Leibniz rule and the anchor $\widehat{\rho}:J^*A \rightarrow TS $ is defined by extending $\rho$ in a $C^\infty(S) $-linear fashion to $\Gamma (J^*A)$. The algebroid action $\rho$ of $A$ on $J$ is {\em complete} if $\rho(u)\in \mathfrak{X} (S)$ is a complete vector field whenever $\mathtt{a}(u)\in \mathfrak{X} (M)$ is complete.

\subsubsection{Double Lie groupoids and LA-algebroids} \begin{defi}[\cite{browmac,macdou}] A double topological groupoid is a groupoid object in the category of topological groupoids and is represented as follows:
\[ \xymatrix{ \mathcal{G}\ar@<-.5ex>[r] \ar@<.5ex>[r]\ar@<-.5ex>[d] \ar@<.5ex>[d]& H\ar@<-.5ex>[d] \ar@<.5ex>[d] \\ K\ar@<-.5ex>[r] \ar@<.5ex>[r] & S. }\]  
In the previous diagram, each of the sides denotes a groupoid structure and the structure maps of $\mathcal{G} $ over $K$ are groupoid morphisms with respect to the groupoids $\mathcal{G} \rightrightarrows H$ and $K \rightrightarrows S$. A {\em double Lie groupoid} is a double topological groupoid such that: (1) each of the side groupoids is a smooth groupoid, (2) $H$ and $K $ are Lie groupoids over $S$, and (3) the double source map $(\mathtt{s}^H,\mathtt{s}^K):  \mathcal{G} \rightarrow {H} \times_S {K}:=H_{\mathtt{s}_H} \times_{\mathtt{s}_K} K  $ is a surjective submersion (the superindices $\quad^H,\quad^K$ denote the groupoid structures $\mathcal{G} \rightrightarrows H$, $\mathcal{G} \rightrightarrows K$ respectively). \end{defi}
The infinitesimal counterpart of a double Lie groupoid is provided by the following concept. 
\begin{defi}[\cite{macdou}] An {\em LA-groupoid} is a Lie groupoid $\mathcal{B}  \rightrightarrows B$ such that: (1) $ \mathcal{B} $ and $B$ are Lie algebroids and all the structure maps are Lie algebroid morphisms over the structure maps of a base groupoid $G \rightrightarrows M$, (2) the map $\mathtt{s}'_{\mathcal{B} } :\mathcal{B}  \rightarrow \mathtt{s}_{G}^*B $ induced by $\mathtt{s}_{\mathcal{B} }$ is surjective. An LA-groupoid is \emph{vacant} if $\mathtt{s}'_{\mathcal{B}}$ is an isomorphism. \end{defi}
The most fundamental example of an LA-groupoid is the tangent bundle of a Lie groupoid. A \emph{multiplicative foliation} is an LA-subgroupoid of the tangent groupoid of a Lie groupoid \cite{groqua,muldir}.

\section{Multiplicative foliations and vacant double Lie groupoids}\label{sec:leavac} A Lie groupoid action on another Lie groupoid \cite[Definition 3.1]{higmacalg} induces a Lie groupoid structure on the quotient if the action is principal on the base, see \cite[Lemma 2.1]{intinfact}. In this section we generalize this observation thanks to the notion of vacant double Lie groupoid, see Theorem \ref{vac3}. Theorem \ref{vac3} will be the key ingredient in our proof that a Lie algebroid acting on a surjective submersion is integrable if such an action is complete, see Theorem \ref{comsym}.

\subsection{Vacant double Lie groupoids and their orbit spaces} The global counterpart of a vacant LA-groupoid is given by the following notion.
\begin{defi}[\cite{macdou}] A double Lie groupoid in which the double source map is a diffeomorphism is called {\em vacant}. \end{defi}
In the proof of the following theorem, we shall use the following equivalent description of a vacant double Lie groupoid. If $\mathcal{G} $ is a Lie groupoid over $S$ and $H,K$ are Lie subgroupoids such that the multiplication map ${H} \times_S K   \rightarrow \mathcal{G} $ is a diffeomorphism, then $\mathcal{G} $ is a vacant double Lie groupoid with sides ${H} $ and $K $. In fact, each element in $\mathcal{G} $ can be written as $\mathtt{m}(h,k)=\mathtt{m}(\overline{k},\overline{h})$ for some $h,\overline{h}\in {H} $ $k,\overline{k}\in K $. So we get two action groupoids $H \times_S K \rightrightarrows {K} $ and ${H} \times_S K \rightrightarrows H $ for the actions $(h,k)\mapsto {}^hk:=\overline{k}$, $(h,k)\mapsto h^k:=\overline{h}$ respectively. These structures make $\mathcal{G} $ with sides $H,K $ into a vacant double Lie groupoid over $S$ and all vacant double Lie groupoids are of this form, see \cite[Thm. 2.15]{macdou}. In one of the simplest situations, if $G$ and $G^*$ are 1-connected complete Poisson groups in duality, then we get a vacant double Lie groupoid on the double $G \times G^*$ with sides $G$ and $G^*$ over a point: the action groupoid structures over $G$ and over $G^*$ being the dressing actions and the group structure on $G \times G^*$ is the one that integrates the double $\mathfrak{g} \bowtie \mathfrak{g}^*$ of the tangent Lie bialgebra $(\mathfrak{g} ,\mathfrak{g}^*)$, see \cite{luphd,macdou}.
\begin{thm}\label{vac3} Let $\mathcal{G} $ be a vacant double Lie groupoid with sides ${H} ,K $ over $S$. If $H \rightrightarrows S$ is proper with trivial isotropy, then the orbit space $K/\mathcal{G} $ inherits a unique Lie groupoid structure over $S/H$ such that the projection $K \rightarrow K/\mathcal{G} $ is a Lie groupoid morphism. \end{thm}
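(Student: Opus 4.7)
The plan is to first show that $K/\mathcal{G}$ carries a smooth manifold structure, by verifying that the action groupoid $\mathcal{G}\rightrightarrows K$ is proper with trivial isotropy, and then to descend the Lie groupoid structure of $K\rightrightarrows S$ to $K/\mathcal{G}\rightrightarrows S/H$ using the compatibility built into the double groupoid.

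Using the identification $\mathcal{G}\cong H\times_S K$ together with the action $(h,k)\mapsto {}^h k$, the compatibility equations of a double groupoid yield $\mathtt{t}_K({}^h k)=\mathtt{t}_H(h)$, so the $H$-action on $K$ covers its action on $S$. Consequently, $k={}^h k$ forces $\mathtt{s}_H(h)=\mathtt{t}_H(h)$, so $h$ is a unit by trivial isotropy of $H$; and if a sequence $(k_n,{}^{h_n}k_n)$ converges in $K\times K$, then $(\mathtt{s}_H(h_n),\mathtt{t}_H(h_n))$ converges in $S\times S$, and properness of $H$ yields a convergent subsequence of $(h_n)$. Godement's criterion then produces a smooth structure on $K/\mathcal{G}$ (and, applied to $H\rightrightarrows S$, on $S/H$) for which the quotient maps are surjective submersions.

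Next I would descend the structure maps. Since each of $\mathtt{s}_K,\mathtt{t}_K\colon K\to S$ sends $H$-orbits to $H$-orbits (again by the double groupoid compatibilities), they factor through $\bar{\mathtt{s}},\bar{\mathtt{t}}\colon K/\mathcal{G}\to S/H$. For multiplication, given $[k_1],[k_2]$ with $\bar{\mathtt{s}}[k_1]=\bar{\mathtt{t}}[k_2]$, I pick representatives with $\mathtt{s}_K(k_1)=\mathtt{t}_K(k_2)$ and set $[k_1]\cdot[k_2]:=[k_1 k_2]$. The main obstacle is showing this is independent of choices. Given another representative pair $(k_1',k_2')$ with $k_i'\sim k_i$, freeness of the action (which follows from trivial isotropy) provides unique $h_i\in H$ with $k_i'={}^{h_i}k_i$. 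A short calculation from the compatibility equations shows that $h_1^{k_1}$ and $h_2$ both go from $\mathtt{s}_K(k_1)=\mathtt{t}_K(k_2)$ to $\mathtt{s}_K(k_1')=\mathtt{t}_K(k_2')$ in $S$; injectivity of $(\mathtt{s}_H,\mathtt{t}_H)$ (equivalent to trivial isotropy) then forces $h_1^{k_1}=h_2$, which is exactly the condition for the squares $g_i:=h_i k_i\in\mathcal{G}$ to be composable in $\mathcal{G}\rightrightarrows H$. Because $\mathtt{s}^K$ and $\mathtt{t}^K$ are groupoid morphisms from $\mathcal{G}\rightrightarrows H$ to $K\rightrightarrows S$ (the interchange law of the double groupoid), the product of $g_1$ and $g_2$ in $\mathcal{G}\rightrightarrows H$ is an arrow in $\mathcal{G}\rightrightarrows K$ from $k_1 k_2$ to $k_1' k_2'$, so $[k_1 k_2]=[k_1' k_2']$.

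Units and inverses descend by entirely analogous arguments. Smoothness of the resulting groupoid operations follows from smoothness on $K$ together with the submersion property of $K\to K/\mathcal{G}$, and uniqueness is automatic since requiring the projection to be a Lie groupoid morphism determines the whole structure. The hard step is the well-definedness of multiplication, where trivial isotropy is essential in reducing two a priori independent $H$-translations to a single composable pair in $\mathcal{G}\rightrightarrows H$.
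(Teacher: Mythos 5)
Your proof is correct and follows essentially the same route as the paper: identify $\mathcal{G}\rightrightarrows K$ with the action groupoid of the free $H$-action on $K$ (freeness coming from trivial isotropy of $H$), obtain the quotient manifold structure, and reduce the rest to the well-definedness of the induced multiplication. The only differences are cosmetic: the paper produces the quotient manifold from an explicit slice pulled back from $S$ rather than from properness of $\mathcal{G}\rightrightarrows K$, and it checks well-definedness of $\overline{\mathtt{m}}$ via the factorization identity $({}^{h'}k)({}^{(h'^k)h}k')={}^{h'}(k({}^hk'))$ inside $\mathcal{G}$, which is the same computation you carry out by matching the two $H$-translations and applying the interchange law to a composable pair of squares.
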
 

\begin{proof} Thanks to the description in the previous paragraph, we get that $\mathcal{G} \rightrightarrows K$ is isomorphic to the action groupoid $H \times_S K \rightrightarrows K$ associated to the action $(h,k)  \mapsto {}^hk$. The fact that $H \rightrightarrows S$ is proper with trivial isotropy implies that the orbit space $S/H$ inherits a unique manifold structure such that the projection $S \rightarrow S/H$ is a submersion, see \cite[Example 5.33]{moeint}\footnote{Notice that $H$ is automatically Hausdorff since the map $(\mathtt{t}_H,\mathtt{s}_H):H \rightarrow S \times S$ is injective and its image is Hausdorff.}. We shall see that the isotropy groups of $H \times_S K \rightrightarrows K$ are also trivial. Suppose that $(h,k)\in H \times_S K$ is such that ${}^hk=k$, this means that $hk={}^hkh^k=kh^k$ and so $\mathtt{s}(h)=\mathtt{t}(k)=\mathtt{t}(h)$. Then $h$ is a unit as desired. Since the $H$-action on $K$ is free, we get that $K/\mathcal{G} =K/H$ inherits a unique manifold structure such that the projection $K \rightarrow K/H$ is a submersion. In fact, if we take the pullback with $\mathtt{s}_K$ of a slice for the natural $H$-action on $S$, then we get a slice for the $H$-action on $K$, see the proof of \cite[Lemma 2.1]{intinfact}. 

We just have to check now that the groupoid structure on $K$ descends to $K/H$. For instance, let us check that $\mathtt{s}_K$ induces a well defined map $\overline{\mathtt{s} }: K/H \rightarrow S/H$. Take $(h,k)\in H \times_S K$, then $hk={}^hkh^k$ which means that $\mathtt{s}_K({}^hk)=\mathtt{t}(h^k)$. Since $h^k \in H$ satisfies that $\mathtt{s}_K(h^k)=\mathtt{s}_K(k) $, we get that $\mathtt{s}_K(k)$ and $\mathtt{s}_K({}^hk)$ lie in the same $H$-orbit as desired. It is somewhat less clear that the multiplication in $K/H$ is well defined. Take $k,k'\in K$ such that there exists $h\in H$ with the property that $\mathtt{s}_K(k)=\mathtt{t}_H(h)$ and $\mathtt{t}_K(k')=\mathtt{s}_H(h)$. So we define $\overline{\mathtt{m} }(H\cdot k,H\cdot k')=H\cdot \mathtt{m}_K(k,{}^hk')$. If $h'\in H$ is such that ${}^{h'}k$ is defined, let us notice that $\mathtt{s }_K({}^{h'}k)=\mathtt{t}_K({}^{(h'^k)h}k') $. We have to check that $\mathtt{m}_K(k,{}^hk')$ and $\mathtt{m}_K({}^{h'}k,{}^{(h'^k)h}k')$ lie in the same $H$-orbit. Indeed:
\[ {}^{h'}(k({}^hk'))h'^{k({}^hk')}= h'k({}^hk')={}^{h'}k(h'^k)({}^hk')={}^{h'}k({}^{(h'^k)h}k')h'^{k({}^hk')}  \] 
and hence we have that $({}^{h'}k)({}^{(h'^k)h}k')= {}^{h'}(k({}^hk'))$ as we wanted to show. Therefore, the groupoid structure on the quotient is well defined. \end{proof}  
\begin{rema} In the previous theorem, if  $\mathcal{G} \rightrightarrows K$ and $H \rightrightarrows S$ are the action groupoids associated to a Lie groupoid action  of $G \rightrightarrows M$ on $K \rightrightarrows S$ as in \cite[Definition 3.1]{higmacalg}, then we recover \cite[Lemma 2.1]{intinfact}. \end{rema}
Infinitesimally, the situation described by Theorem \ref{vac3} corresponds to a vacant multiplicative foliation given by the LA-groupoid $\text{Lie} (\mathcal{G} ) \rightrightarrows \text{Lie} (H)$ over $K \rightrightarrows S$. In general, the leaf space of a (vacant) multiplicative foliation does not inherit any groupoid structure compatible with the quotient map, see \cite{leaspa} for a general description of the necessary and sufficient conditions for that conclusion to hold. 	

\subsection{Monodromy groupoids of vacant multiplicative foliations} Now we are going to give a simple criterion that allows us to recognize when the monodromy groupoids of a vacant multiplicative foliation constitute a vacant double Lie groupoid. Let $K \rightrightarrows S$ be a Lie groupoid and let $\mathcal{B} \rightrightarrows B$ be multiplicative foliation on $H \rightrightarrows S$ which is vacant as an LA-groupoid. In this situation, we have the following result. 
\begin{prop}\label{intvac} Let $\mathcal{G} $ and $H$ be the monodromy groupoids of $\mathcal{B}$ and $B$ respectively. If the source map of $K$ restricts to a covering map between every compatible pair of $\mathcal{B} $ and $B$-orbits, then $\mathcal{G} $ is a vacant double Lie groupoid with sides $H$ and $K$. \end{prop}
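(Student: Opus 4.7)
The plan is to construct the second groupoid structure $\mathcal{G}\rightrightarrows H$ from the multiplicativity of $\mathcal{B}$ via the path-space model of monodromy, and then to extract vacancy from the hypothesis by unique path lifting along the cover. Throughout I identify an element of $\mathcal{G}$ with a rel-endpoint homotopy class $[\gamma]$ of a smooth path $\gamma\colon[0,1]\to K$ with $\dot\gamma\in\mathcal{B}$, and $H$ is described analogously in terms of $B$.

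I would first produce the structure maps of $\mathcal{G}\rightrightarrows H$. Since $\mathcal{B}\rightrightarrows B$ is an LA-subgroupoid of $TK\rightrightarrows TS$, the tangent source and target maps restrict to $\mathcal{B}\to B$, so post-composing a $\mathcal{B}$-path in $K$ with $\mathtt{s}_K$ or $\mathtt{t}_K$ yields a $B$-path in $S$; this gives well-defined maps $\mathtt{s}^H_{\mathcal{G}},\mathtt{t}^H_{\mathcal{G}}\colon\mathcal{G}\to H$ by $[\gamma]\mapsto[\mathtt{s}_K\circ\gamma],[\mathtt{t}_K\circ\gamma]$. For two classes composable over $H$, I choose representatives with $\mathtt{s}_K\circ\gamma=\mathtt{t}_K\circ\delta$ pointwise and define $(\gamma\delta)(t):=\gamma(t)\delta(t)$ using the groupoid multiplication of $K$; multiplicativity of $\mathcal{B}$, i.e.\ closure under $T\mathtt{m}_K$, ensures $\gamma\delta$ is again $\mathcal{B}$-tangent. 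Analogous pointwise formulas define units and inversion, and the interchange identity with the monodromy multiplication $\mathcal{G}\rightrightarrows K$ is automatic since both operations are pointwise in $t$; routine checks show everything descends to homotopy classes.

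The decisive step is vacancy. Given $([\eta],k)\in H\times_S K$ with $\eta(0)=\mathtt{s}_K(k)$, vacancy of $\mathcal{B}\to\mathtt{s}_K^*B$ forces $T\mathtt{s}_K|_{\mathcal{B}}$ to be a fiberwise isomorphism, so $\mathtt{s}_K$ restricts to a local diffeomorphism from the $\mathcal{B}$-leaf through $k$ onto the $B$-leaf containing the image of $\eta$; the hypothesis promotes this to a covering map, so $\eta$ admits a unique $\mathcal{B}$-tangent lift $\tilde\eta\colon[0,1]\to K$ with $\tilde\eta(0)=k$. Sending $([\eta],k)$ to $[\tilde\eta]$ defines an inverse to the double source map $(\mathtt{s}^H_{\mathcal{G}},\mathtt{s}^K_{\mathcal{G}})\colon\mathcal{G}\to H\times_S K$, $[\gamma]\mapsto([\mathtt{s}_K\circ\gamma],\gamma(0))$; the homotopy lifting property of coverings ensures well-definedness on classes, and smoothness follows from smoothness of lifts through charts of the cover.

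I expect the main obstacle to be the smooth-manifold bookkeeping: giving $\mathcal{G}$ a coherent smooth structure that simultaneously makes both $\mathcal{G}\rightrightarrows K$ and $\mathcal{G}\rightrightarrows H$ into Lie groupoids, and certifying smoothness of the inverse of the double source map. Once smoothness is in hand, the double groupoid axioms and their compatibility reduce to pointwise identities in $K$, and the covering hypothesis supplies exactly the global data needed to pass from the local lifting afforded by vacancy to the global lifting required for the double source map to be a bijection.
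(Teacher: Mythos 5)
Your proposal is correct and matches the paper's proof in essence: the structure maps of $\mathcal{G}\rightrightarrows H$ are obtained by applying $\mathtt{s}_K$, $\mathtt{t}_K$, $\mathtt{m}_K$ pointwise to $\mathcal{B}$-paths (the paper makes explicit that finding pointwise-composable representatives requires lifting the connecting homotopy through the covering $\mathtt{s}_K$ restricted to a $\mathcal{B}$-leaf, which is implicit in your ``choose representatives'' step), and vacancy is established exactly as you do, by combining the fiberwise injectivity of $T\mathtt{s}_K|_{\mathcal{B}}$ coming from the vacant LA-groupoid hypothesis with unique path lifting through the leafwise covering to invert the double source map.
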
 
\begin{proof} Suppose that $\mathtt{s}_K$ restricted to every $\mathcal{B} $-orbit is a covering map and let $\mathtt{s}^H$, $\mathtt{t}^H$ be the Lie groupoid morphisms $\mathcal{G} \rightarrow H$ which integrate the source and target maps of $\mathcal{B} \rightrightarrows B$ respectively. Take $[a],[b]\in \mathcal{G} $ such that $\mathtt{s}^H(a)$ and $\mathtt{t}^H(b)$ are homotopic relative to their endpoints by means of $h$. Then we can lift $h$ to a homotopy from $a$ to $a'$ where $a'$ is pointwise composable with $b$, so we can define $\mathtt{m}^H( [a], [b])$ as the class of the path defined by $t\mapsto \mathtt{m}_K(a'(t),b(t))$. This multiplication over $H$ satisfies the interchange law with respect to $\mathcal{G} \rightrightarrows K$. It remains to check that the double Lie groupoid thus obtained is vacant. We claim that the double source map $(\mathtt{s}^H,\mathtt{s}^K):\mathcal{G} \rightarrow H \times_S K:=H_{\mathtt{s}_H} \times_{\mathtt{s}_K} K  $ is a local diffeomorphism. In fact, since $\mathtt{s}^H $ integrates the source map of $\mathcal{B} \rightrightarrows B$ which is fiberwise injective by the vacant assumption, we have that $ T\mathtt{s}^H $ restricted to $ \ker T \mathtt{s}^K $ is injective and then $T(\mathtt{s}^H,\mathtt{s}^K)$ has trivial kernel. On the other hand, $\mathcal{G} $ and $ H \times_S K$ have the same dimension so the claim follows. Finally, the homotopy lifiting property of the leaves implies that for every $k\in K$ and every $[a]\in H$ such that $a(0)=\mathtt{s}_K(k)$, there exists a unique path $\widetilde{a}$ tangent to $\mathcal{B} $ which starts at $k$ and satisfies $\mathtt{s}^H([\widetilde{a}])=[a]$. As a consequence, the double source map $(\mathtt{s}^H,\mathtt{s}^K):\mathcal{G} \rightarrow H \times_S K$ is a bijection and, therefore, it is a diffeomorphism. \end{proof}

The covering map condition in Proposition \ref{intvac} cannot be dropped: \cite[Example 2.4]{folpoigro} shows a vacant multiplicative foliation in which the associated monodromy groupoids do not admit any double Lie groupoid structure.

\section{Complete Lie algebroid actions and vacant double Lie groupoids} As an application of the arguments developed in the previous section, we recover the following classical result. 
\begin{thm}[\cite{craruipoi,conlin}]\label{comsym} A Lie algebroid $A \rightarrow M$ is integrable if there exists a complete action of $A$ on a surjective submersion $q:S \rightarrow M$ which induces an injective anchor $q^*A \rightarrow TS $ on the associated action Lie algebroid. Moreover, if the foliation induced by the $A$-action has no vanishing cycles, then $A$ is integrable by a Hausdorff Lie groupoid. \end{thm}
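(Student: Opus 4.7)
The plan is to implement the strategy sketched in the introduction. I first construct a vacant multiplicative foliation on the submersion groupoid $K:=S\times_M S\rightrightarrows S$ associated to $q$. Let $J:K\to M$ be the well-defined map $J=q\circ\mathtt{pr}_1=q\circ\mathtt{pr}_2$; the given $A$-action on $q$ lifts to a diagonal action on $J$ by $\rho_K(u)_{(x,y)}:=(\rho(u)_x,\rho(u)_y)$, since each component is $q$-related to $\mathtt{a}(u)$. Setting $\mathcal{B}:=\widehat{\rho_K}(J^*A)\subset TK$ and $\mathcal{F}:=\widehat{\rho}(q^*A)\subset TS$, injectivity of $\widehat{\rho}$ immediately gives injectivity of $\widehat{\rho_K}$, so $\mathcal{B}$ is an involutive subbundle of rank $\rank A$ and therefore a regular foliation. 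A direct computation from the explicit formula for $\mathcal{B}$ shows it is an LA-subgroupoid of $TK\rightrightarrows TS$ (composability in $TK$ forces the two $A$-elements to agree, again by injectivity of $\widehat{\rho}$) and that $\mathtt{s}'_\mathcal{B}:\mathcal{B}\to \mathtt{s}_K^*\mathcal{F}$ is a fibrewise bijection; thus $\mathcal{B}\rightrightarrows \mathcal{F}$ is a vacant multiplicative foliation on $K\rightrightarrows S$.

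Next I would use completeness to verify the covering hypothesis of Proposition \ref{intvac} for the source map $\mathtt{s}_K=\mathtt{pr}_1$. Vacancy of $\mathcal{B}$ guarantees that $\mathtt{pr}_1$ restricted to each $\mathcal{B}$-orbit is a local diffeomorphism onto the corresponding $\mathcal{F}$-orbit (infinitesimally, $d\mathtt{pr}_1$ restricts to an isomorphism between orbit tangent spaces), so it suffices to establish path-lifting. A path in the $\mathcal{F}$-orbit of $x\in S$ can, after reparametrisation, be realised as the integral curve starting at $x$ of a time-dependent section $u_t\in \Gamma(A)$, and by the definition of $\mathcal{B}$ the lift in the $\mathcal{B}$-orbit of any given $(x,y)\in K$ is necessarily the integral curve starting at $y$ of the very same family $u_t$. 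Completeness provides this lift: after the standard reduction (using partitions of unity and compactness of the image path in $M$) to sections of $A$ whose anchor is a complete vector field on $M$, the completeness hypothesis forces $\rho(u_t)$ to be complete on $S$, and the lift is defined for all the required parameter values.

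Proposition \ref{intvac} now produces a vacant double Lie groupoid $\mathcal{G}$ with sides $H:=\mathrm{Mon}(\mathcal{F})$ and $K=S\times_M S$ over $S$. The submersion groupoid $K$ is proper with trivial isotropy, since $(\mathtt{t}_K,\mathtt{s}_K)$ is the closed embedding of $S\times_M S$ into $S\times S$ and the isotropy groups are singletons. Applying Theorem \ref{vac3} with $K$ playing the role of its proper side, we obtain a Lie groupoid
\[ G:=\mathrm{Mon}(\mathcal{F})/K\rightrightarrows S/K=M, \]
the identification $S/K=M$ coming from surjectivity of $q$, together with a surjective submersion $\mathrm{Mon}(\mathcal{F})\to G$ of Lie groupoids. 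To see that $\text{Lie}(G)\cong A$, I would apply the Lie functor to this submersion: it produces a Lie algebroid morphism out of $\mathcal{F}\cong q^*A$ whose restriction to units over $M$ is the canonical projection $q^*A\to A$, so the Lie algebroid of $G$ is $A$ and $A$ is integrable.

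Finally, since the monodromy groupoid of a regular foliation is Hausdorff exactly when the foliation has no vanishing cycles, the extra hypothesis forces $\mathrm{Mon}(\mathcal{F})$ to be Hausdorff; the quotient $G$ then inherits this property because the equivalence relation defining it is the orbit equivalence of a free action of the proper Hausdorff groupoid $K$. The step I expect to be the main technical obstacle is the path-lifting argument of the second paragraph, as this is the only place where the completeness hypothesis enters in an essential global way, and the author's remark in the introduction about using a weaker notion of completeness than \cite{craruipoi} suggests that the precise formulation of this step must be handled with care.
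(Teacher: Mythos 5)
Your overall architecture coincides with the paper's: the diagonal lift of $\rho$ to the submersion groupoid $K=S\times_M S$, the resulting vacant multiplicative foliation, Proposition \ref{intvac} to get a vacant double Lie groupoid out of the monodromy groupoids, Theorem \ref{vac3} applied with $K$ as the proper side to produce $\mathrm{Mon}(\mathcal{F})/K\rightrightarrows M$, the identification of its Lie algebroid with $A$ via $q^*\mathrm{Lie}(G)\cong q^*A$, and the Hausdorff statement via the no-vanishing-cycles criterion for monodromy groupoids together with the properness and freeness of the $K$-action. All of that matches.

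The genuine gap is exactly where you predicted it: your verification of the covering hypothesis of Proposition \ref{intvac}. You reduce it to lifting a path in an $\mathcal{F}$-orbit realised as an integral curve of a \emph{time-dependent} family $u_t\in\Gamma(A)$, and you assert that ``the completeness hypothesis forces $\rho(u_t)$ to be complete on $S$.'' The completeness hypothesis of the theorem only concerns time-independent sections: it says $\rho(u)$ is complete whenever $\mathtt{a}(u)$ is. Completeness of each frozen-time vector field $\rho(u_{t_0})$ does not imply that the time-dependent flow of $t\mapsto\rho(u_t)$ exists up to time $1$, and the paper's Remark following the theorem statement explicitly records that the time-dependent version is the apparently stronger hypothesis used in \cite{craruipoi}, whose equivalence with the present one is unknown. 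So as written your argument proves the theorem only under that stronger hypothesis. The paper's workaround is Lemma \ref{vac0}: fix $p$ and finitely many \emph{compactly supported} sections $u_1,\dots,u_k$ with $\rho(u_i)(p)$ a basis of the orbit direction at $p$; their anchors are complete, hence so are the $\rho(u_i)$, and the composite of their (time-independent) flows produces a plaque $U$ around $p$ together with plaques $\widetilde U_x$ over each $x\in\mathtt{s}^{-1}(p)$ in the $\mathcal{B}$-orbit; since each flow is a groupoid automorphism of $K$, these plaques are pairwise disjoint and map diffeomorphically onto $U$, giving an evenly covered neighbourhood directly. If you replace your path-lifting step by this construction (or at least by a concatenation of integral curves of finitely many time-independent compactly supported sections, plus an argument producing evenly covered neighbourhoods rather than just surjectivity of a local diffeomorphism), the rest of your proof goes through.
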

\begin{rema} R. L. Fernandes pointed out to the author that this result was proved in \cite{craruipoi} under an apparently stronger assumption on the action map $\rho: \Gamma (A) \rightarrow \mathfrak{X}(S) $: $\rho(u_t)\in \mathfrak{X}(S) $ should be complete for every time-dependent smooth family $u_t\in \Gamma (A)$ such that $\mathtt{a} (u_t)\in \mathfrak{X}(M) $ is complete. We do not know if this condition is equivalent to the one we use here. \end{rema}
Let us stress that in the following proof we shall only use monodromy groupoids of foliations so we will not need to consider Banach manifolds as in \cite{crarui,craruipoi}.
The main steps in our proof are the following:
\begin{enumerate} \item first of all, we consider the multiplicative foliation $\mathcal{B} \rightrightarrows B$ induced by the $A$-action on the submersion groupoid $S \times_M S \rightrightarrows S$ associated to $q$, 
\item we observe how the completeness assumption on the action implies that the monodromy groupoids associated to $\mathfrak{B} \rightrightarrows B$ constitute a vacant double Lie groupoid,
\item finally, we apply Theorem \ref{vac3} to the vacant double Lie groupoid thus obtained, since the submersion groupoid is proper with trivial isotropy. \end{enumerate}  
\begin{lem}\label{vac0} Let $\mathcal{B} \rightrightarrows B$ be a vacant multiplicative foliation on a Lie groupoid $K \rightrightarrows S$. Suppose that there is a subspace $\mathcal{W}\subset \Gamma (\mathcal{B} )\subset \mathfrak{X}   (K)$ consisting of complete multiplicative vector fields with the following property: for every $v\in B_p$ and every $p\in S$, there exists $\mathcal{X}\in \mathcal{W} $ such that $\mathcal{X}_p=v$. Then the $\mathcal{B} $-orbits are coverings of the corresponding $B$-orbits by means of $\mathtt{s} :K \rightarrow S$. \end{lem}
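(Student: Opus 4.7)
My plan is to combine the vacant hypothesis, which makes $T\mathtt{s}\colon\mathcal{B}\to\mathtt{s}^{*}B$ a fiberwise isomorphism, with the completeness of the multiplicative vector fields in $\mathcal{W}$ in order to show that $\mathtt{s}$ restricts to a surjective local diffeomorphism from each $\mathcal{B}$-orbit onto the corresponding $B$-orbit enjoying unique path lifting, hence a covering map.

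First I would unpack the consequences of multiplicativity. For any $\mathcal{X}\in\mathcal{W}$, the standard description of multiplicative vector fields shows that $\mathcal{X}$ is $\mathtt{s}$-related to its restriction $\mathcal{X}^{S}:=\mathcal{X}|_{S}\in\Gamma(B)$, so that $\mathtt{s}\circ\phi^{\mathcal{X}}_{t}=\phi^{\mathcal{X}^{S}}_{t}\circ\mathtt{s}$ for every $t\in\mathbb{R}$; moreover, $\mathcal{X}^{S}$ is complete since $\mathcal{X}$ is complete and $\mathtt{s}$ is surjective. The key first observation is that the pointwise spanning hypothesis on units propagates to every point of $K$: given $k\in K$, pick $\mathcal{X}_{1},\ldots,\mathcal{X}_{r}\in\mathcal{W}$ such that $\{(\mathcal{X}_{i})_{\mathtt{s}(k)}\}$ is a basis of $B_{\mathtt{s}(k)}$; since $T\mathtt{s}((\mathcal{X}_{i})_{k})=(\mathcal{X}_{i}^{S})_{\mathtt{s}(k)}$ and $T\mathtt{s}\colon\mathcal{B}_{k}\to B_{\mathtt{s}(k)}$ is an isomorphism by the vacant assumption, it follows that $\{(\mathcal{X}_{i})_{k}\}$ is a basis of $\mathcal{B}_{k}$.

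Second, I would invoke a standard connectedness argument. Since $B$ is a regular involutive distribution spanned pointwise by the complete vector fields $\{\mathcal{X}^{S}:\mathcal{X}\in\mathcal{W}\}$, the orbit of a point $p$ under the pseudogroup generated by these flows is open in its $B$-orbit $O$ (by local flow-box coordinates), and since $O$ is connected and partitioned into such orbits, this orbit equals $O$; in symbols, $O=\{\phi^{\mathcal{X}_{n}^{S}}_{t_{n}}\cdots\phi^{\mathcal{X}_{1}^{S}}_{t_{1}}(p):n\geq 0,\,\mathcal{X}_{i}\in\mathcal{W},\,t_{i}\in\mathbb{R}\}$. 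The analogous description holds for $\mathcal{B}$-orbits in $K$ using the extended spanning from the previous paragraph, and the flow-intertwining identity immediately gives $\mathtt{s}(\mathcal{O})=O$ for the $\mathcal{B}$-orbit $\mathcal{O}$ containing any chosen lift $k_{0}\in\mathtt{s}^{-1}(p)$; moreover $\mathtt{s}|_{\mathcal{O}}\colon\mathcal{O}\to O$ is a local diffeomorphism by the vacant condition.

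Finally, to upgrade this local diffeomorphism to a covering map I would construct an evenly covered neighborhood of an arbitrary $p\in O$. Fix $\mathcal{X}_{1},\ldots,\mathcal{X}_{r}\in\mathcal{W}$ whose values at $p$ form a basis of $B_{p}$ and choose an open neighborhood $W\subset\mathbb{R}^{r}$ of the origin on which $(t_{1},\ldots,t_{r})\mapsto\phi^{\mathcal{X}_{r}^{S}}_{t_{r}}\cdots\phi^{\mathcal{X}_{1}^{S}}_{t_{1}}(p)$ is a diffeomorphism onto an open neighborhood $V\subset O$; for each $k_{0}\in\mathtt{s}^{-1}(p)\cap\mathcal{O}$, the lifted composition $(t_{1},\ldots,t_{r})\mapsto\phi^{\mathcal{X}_{r}}_{t_{r}}\cdots\phi^{\mathcal{X}_{1}}_{t_{1}}(k_{0})$ produces a sheet $V_{k_{0}}\subset\mathcal{O}$ which $\mathtt{s}$ maps diffeomorphically onto $V$ by the intertwining identity. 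The sheets $\{V_{k_{0}}\}$ exhaust $\mathtt{s}^{-1}(V)\cap\mathcal{O}$ and are pairwise disjoint, both facts being seen by inverting the parametrizing flow composition globally (essentially using completeness) so as to pull any point of $\mathtt{s}^{-1}(V)\cap\mathcal{O}$ back to a unique element of $\mathtt{s}^{-1}(p)\cap\mathcal{O}$. The main subtlety I anticipate is exactly this exhaustion-and-disjointness step, where the global nature of the completeness hypothesis must be combined carefully with the vacant condition in order to single out a unique preimage sheet above each base point.
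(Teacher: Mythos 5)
Your proposal is correct and follows essentially the same route as the paper: composing flows of the complete multiplicative fields to build an evenly covered neighborhood $U$ of each $p$ in the $B$-orbit, lifting these flows through $\mathtt{s}$-relatedness, using vacancy to see the lifted sheets are diffeomorphic to $U$, and using invertibility of the flows for disjointness and exhaustion of the fiber. The only addition is your explicit connectedness argument for surjectivity onto the $B$-orbit, which the paper leaves implicit.
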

	\begin{proof} Take $p\in S$ and an ordered basis $\{e_i\}_{i=1\dots k}\subset B_p$. Let $\mathcal{X}_i\in \mathcal{W} $ be vector fields such that $\mathcal{X}_i(p)=e_i$ for all $i$. Define a map $\Phi:\mathbb{R}^k  \rightarrow S$ as $\Phi(t_1,\dots,t_k)=\Phi^{t_k}_{\mathcal{X}_k}\circ \dots \circ\Phi^{t_1}_{\mathcal{X}_1}(p)$, where $\Phi^t_{\mathcal{X}_i}$ is the flow of $\mathcal{X}_i$. Then $\Phi $ is an injective immersion on a neighborhood $V\subset \mathbb{R}^k $ of the origin whose image $U=\Phi(V)$ is embedded in $S$ and is an open set in the $B$-orbit $\mathcal{O} $ that passes through $p$. For every $x\in K$ such that $\mathtt{s}(x)= p$ we can define a neighborhood of $x$ in the $\mathcal{B}$-orbit that passes through $x$ as follows. Take $\widetilde{U}_{x} =\{\Phi^{t_k}_{\mathcal{X}_k}\circ \dots \circ\Phi^{t_1}_{\mathcal{X}_1}(x) :(t_1,\dots, t_k)\in V\}$. We claim that $\mathtt{s}|:{\widetilde{U}_{x}} \rightarrow U$ is a diffeomorphism. In fact, since $\mathcal{X}_i $ is multiplicative, it is $\mathtt{s}$-related to $\mathcal{X}_i|_S$. So we have that $\mathtt{s}(\Phi^t_{\mathcal{X}_i }(x))= \Phi^t_{\mathcal{X}_i }(\mathtt{s}(x))$ for all $x\in K$ and all $t\in \mathbb{R} $ and then it follows that $\mathtt{s}(\Phi^{t_k}_{\mathcal{X}_k}\circ \dots \circ\Phi^{t_1}_{\mathcal{X}_1}(x))=\Phi(t_1,\dots,t_k)$ as long as $\mathtt{s}(x)=p$. As a consequence, $\mathtt{s}$ establishes a bijection between $\widetilde{U}_x$ and $U$. On the other hand, since the foliation $\mathcal{B} $ is vacant and the $\mathcal{X}_i$ are multiplicative, the vector fields $\mathcal{X}_i$ are also pointwise independent over $\widetilde{U}_x$ and hence $\mathtt{s}|:{\widetilde{U}_{x}} \rightarrow U$ is indeed a diffeomorphism. Let $x,y \in K$ be such that $\mathtt{s}(x)=\mathtt{s}(y)=p$. Since each $\Phi^{t_1}_{\mathcal{X}_1}$ is an automorphism of $K$, we have that $\widetilde{U}_{x}\cap \widetilde{U}_{y}=\emptyset $ if $x\neq y$. Then $\mathtt{s}^{-1}(U)\cap \widetilde{\mathcal{O} } \cong \coprod_{x\in \mathtt{s}^{-1}(p)\cap \widetilde{\mathcal{O} }}   \widetilde{U}_{[a]}$, where $\widetilde{\mathcal{O} }$ is the $\mathcal{B}$-orbit that passes through $x$. Therefore, $\mathtt{s}|: \widetilde{\mathcal{O} } \rightarrow \mathcal{O} $ is a covering map. \end{proof}

\begin{proof}[Proof of Theorem \ref{comsym}] Take a complete action $\rho:\Gamma (A) \rightarrow \mathfrak{X}(S)$ of $A$ on $q$ which induces an injective anchor $\widehat{\rho}:q^*A \rightarrow TS $, where $q:S \rightarrow M$ is a surjective submersion. There is an induced action of $A$ on $q\circ \mathtt{s}$, where $\mathtt{s}$ is the source map of the submersion groupoid $K:=S \times_M S \rightrightarrows S$. In fact, simply define the action map $\Gamma (A) \rightarrow \mathfrak{X}  (K)$ as follows: $u \mapsto (\rho(u),\rho(u))$ for all $u\in \Gamma (A)$. This action induces a vacant multiplicative foliation $\mathcal{B} \rightrightarrows B$ over $S \times_M S \rightrightarrows S$, where $B:=q^*A$ is the action Lie algebroid associated to $\rho$. Let $\mathcal{G} $ be the monodromy groupoid of $\mathcal{B} $ and let ${H}  $ be the monodromy groupoid of $B$. In order to apply Proposition \ref{intvac} we have to see that the $\mathcal{B} $-orbits cover the corresponding ${B} $-orbits by means of $\mathtt{s}$. This follows from Lemma \ref{vac0} by putting 
\[ \mathcal{W}=\{(\rho(u),\rho(u))\in \mathfrak{X}(K)| \text{$\forall u\in \Gamma (A)$ of compact support} \}. \]

	Therefore, Proposition \ref{intvac} implies that $\mathcal{G} $ is a vacant double Lie groupoid with sides ${H} $ and $K$. Since $S \times_M S \rightrightarrows S$ is proper with trivial isotropy, Theorem \ref{vac3} implies that the orbit space $H/ \mathcal{G} $ of $\mathcal{G} \rightrightarrows H$ inherits a unique Lie groupoid structure over $M$ such that the projection map $H \rightarrow H/\mathcal{G} $ is a Lie groupoid morphism. Since we have that $q^* \text{Lie} (H/\mathcal{G} )\cong q^*A$, it follows that $\text{Lie} (H/ \mathcal{G} )\cong A$. If the foliation induced by the $A$-action has no vanishing cycles, then ${H} $ is Hausdorff \cite[Sec. 5.2]{moeint}. This implies that the orbit space $H/\mathcal{G}$ is a Hausdorff integration of $A$, since it coincides with the orbit space $H/(S \times_M S)$ for the associated lifted action of $S \times_M S \rightrightarrows S$ on $H$, see the proof of Theorem \ref{vac3}. \end{proof}

\printbibliography
\end{document}